\documentclass{amsart}
\usepackage{graphicx}
\usepackage{indentfirst,csquotes}

\usepackage{amssymb,amsthm,amsmath}
\usepackage{xcolor,paralist,hyperref,fancyhdr,etoolbox}
\usepackage{algorithm}
\usepackage{algpseudocode}
\usepackage{float}
\usepackage{tabularx} 
\usepackage{multirow}
\newtheorem{theorem}{Theorem}[section]
\newtheorem{definition}[theorem]{Definition}

\hypersetup{ colorlinks=true, linkcolor=black, filecolor=black, urlcolor=black }

\begin{document}
\title{A New Level Set Formulation for Improved Dirichlet Eigenvalue Minimizers} 
\author{Atharv Thakur}
\address{Address}
\email{atharv.thakur@rutgers.edu}

\begin{abstract}
This paper makes several improvements to existing level set based approaches to computing shape optimizers for the Dirichlet eigenvalues subject to a volume constraint. The most notable changes in formulation include an overhaul of the classical level set construction and root-finding procedures as well the use of a regularized approximation to the standard objective function. Our resulting computational minimizers are either comparable to or improvements on the best known minimizers from the literature. We conclude with a survey of subproblems within the field that may benefit from numerical experiments; these include the existence of cusps on the boundary, the end-behavior of eigenfunction weights in the p-parameterized problem, and the nature of Weyl asymptotics as they relate to the Pólya conjecture.
\end{abstract}
\maketitle

\bigskip

\pagenumbering{arabic}

\section{Introduction}
\bigskip

Let $\Omega \subset \mathbb{R}^N$ be an open bounded set. The spectrum of the Laplace operator with Dirichlet boundary conditions on $\Omega$ is of the form $0 < \lambda_{1}(\Omega) \leq \lambda_{2}(\Omega) \leq \space  ...$ with corresponding eigenfunctions $u_{1},u_{2},...$ chosen to form an orthonormal basis for $L^2(\Omega)$. These solve the problem  
\\
\begin{equation}
\begin{cases}
-\Delta u_{k} = \lambda_{k}(\Omega)u_{k} & \text{in } \Omega,
\\
u_k = 0 & \text{on } \partial \Omega.
\end{cases}
\end{equation}
\\
A classical shape optimization problem asks which domain $\Omega$ minimizes $\lambda_k(\Omega)$ subject to a volume constraint. For $\lambda_1$, Faber \cite{Faber1923} and Krahn \cite{Krahn1925} independently proved the Rayleigh Conjecture: the minimizer under a volume constraint is a ball. For $\lambda_2$, the minimizer was proven to be a disjoint union of two identical balls \cite{Krahn1926}. For $k \geq 3$, however, the question remains open. Numerical studies have attempted to remedy this gap, among which the work of Oudet \cite{Oudet2010} often serves as foundation and comparison point for later works. Oudet employed the widely-used finite element method (FEM) for solving the eigenproblem, but notably also utilized level set techniques \cite{OsherSethian1988} for the task of domain representation. Later authors who built on that work, such as Osting and Kao, further refined the application of these techniques and used them to study the minimization of functionals dependent on the Dirichlet eigenvalues of the Laplacian \cite{OstingKao2013}. Many modern works now focus on developing specialized solvers for the eigenproblem, such as the Method of Fundamental Solutions as used in the work of Antunes and Freitas \cite{AntunesFreitas2012} or the boundary integral method as seen in more recent papers by Osting and Kao \cite{OstingKao2014}. Instead of refining the forward solver, however, this work improves upon the domain representation many mesh-based solvers depend on. 
\\\\
Gradient-based optimization schemes for this problem typically require  parameterizing or approximating points on the boundary. The classical level set method takes the latter approach, requiring rootfinding at each step to approximately align its implicit boundary with an underlying fixed grid. Although using a level set function for domain representation has the advantage of not fixing the topology of the domain, the use of classical level set methods result in a corresponding tradeoff in boundary resolution accuracy. Osting and Kao named the development of a level set method that does not require rootfinding like this a challenging extension of their work \cite{OstingKao2014}. This paper proposes a methodology that addresses that challenge in the context of mesh-based forward solvers.     
\bigskip
\section{Iterative Procedure}
\bigskip
When attempting to minimize the Dirichlet eigenvalues subject to a volume constraint, a common objective function is
\\
\begin{equation}
J(\Omega) \equiv |\Omega|^{\frac{2}{N}}\lambda_k(\Omega).
\end{equation}
\\
Minimizing this function is convenient due to its dilation invariance, which is a simple result of the following properties:
\\
\begin{equation}
\lambda_k(t\Omega) = t^{-2}\lambda_k(\Omega),
\end{equation} 
\begin{equation}
    |t\Omega| = t^N|\Omega|,
\end{equation}
\\
where $t\Omega$ represents the image of $\Omega$ by a homothety of ratio t.
\\\\
However, from a computational perspective, $J$ struggles with the minimization of non-simple eigenvalues. When $\lambda_k = \lambda_{k-1}$, $J$ is no longer differentiable with respect to perturbations of the domain. Since our iterative techniques rely on such a derivative, this lack of regularity is inconvenient. Furthermore, since the optimal second eigenvalue $\lambda_2^*$ is of multiplicity $2$, and the multiplicities of higher eigenvalues are largely unknown, we know this issue cannot simply be ignored. Thus, to maintain differentiability, we instead select the following as our objective function for the purposes of gradient-based optimization:
\\
\begin{equation}
    J_p(\Omega) \equiv |\Omega|\left(\sum_{m=1}^{k} \lambda_m^p\right)^{\frac{1}{p}}.
\end{equation}
\\
The use of a $p$-norm with an undecided parameter introduces some complexities that we will discuss later; for now, it is sufficient to note that $J_p$ acts as a more robust version of $J$ given a sufficiently large $p$ in a computational setting. It is clear that, like $J$, $J_p$ is dilation invariant in two dimensions based on properties (3) and (4). Therefore, the task of minimizing $J_p$ is equivalent up to a rescaling to the problem
$$\min_{|\Omega|=1}\left(\sum_{m=1}^{k}\lambda_m^p\right)^{\frac{1}{p}}.$$
\\
Proving that the volume constrained minimizers of $J_p$ converge to the minimizers of $J$ as $p \to \infty$ takes a little more care. We will prove this later in Theorem 3.5 with a mild domain constraint. 
\\\\
Unfortunately, Theorem 3.5 only gives us a qualitative argument for the convergence of minimizers, as there is broadly a lack of information on the rate of convergence or asymptotic behavior of the $p$-parameterized problem within the field. These present interesting theoretical questions, and we will later provide some relevant numerical experiments in Section 5. However, for the primary goals of this work, we must simply select a $p$ large enough to make errors in approximation negligible. The nuances of choosing exact values of $p$ will be expanded upon later when our computational mechanisms become more apparent. 
\\\\
In line with that primary goal of minimizing $J$, we will only save a candidate minimizer during the iteration process if it minimizes the computed value of $J$, not $J_p$. The benefits of nevertheless selecting $J_p$ as our objective function come from the increased information held in its derivative. To illustrate this point, we must introduce the notion of the shape derivative. Given a vector field $V$ of $\mathbb{R}^N$ known as the deformation field, we can construct derivatives of shape-dependent quantities with respect to deformations in the domain. For us, these are
\\
\begin{equation}
    d(\lambda_k(\Omega))(\Omega,V) = - \int_{\partial\Omega}\left(\frac{\partial u_k}{\partial n}\right)^2\;V \cdot n \; \text{d}\sigma,
\end{equation}
\begin{equation}
    d(|\Omega|)(\Omega,V)=\int_{\partial\Omega}V\cdot n\;\text{d}\sigma,
\end{equation}
\\
where $n$ is the normal vector field of $\partial\Omega$ and $u_k$ is the $k$th eigenfunction normalized such that $||u_k||_{L^2(\Omega)}=1$. Note that, in (6), the eigenvalue $\lambda_k$ must be simple.
\\\\
Applying the shape derivative to $J_p$ gives us an equation of the form
\\
\begin{equation}
    d(J_p(\Omega))(\Omega,V)=c\int_{\partial \Omega} \sum_{m=1}^{k} \left[\mu_m\ \left(\frac{\partial u_m}{\partial n}\right)^2-1\right] V \cdot n\; d\sigma,
\end{equation}
\\
where $c$ and $\mu_m$ are constants dependent on $|\Omega|, \lambda_m,$ and $p$. 
\\\\
Fortunately, (8) remains valid even if the eigenvalues are not simple, giving it a clear advantage over the shape derivative of $J$. This is due to the fact that (\theequation) is invariant under orthonormal changes of basis for eigenfunctions. We will prove this later in Theorem 3.1.
\\\\
Moving forward, we remark that, with any fixed $p$, (\theequation) assigns equal weights $\mu_m$ to the eigenfunctions associated with the uppermost non-simple eigenvalue. The weights assigned to lower eigenfunctions, meanwhile, decay as $p$ increases. To be precise, if $W_p$ are the minimizers of $J_p$, $\mu_{m} \to 0 $ if $\lambda_m(W_p) \to \lambda < \lim_{p\to\infty} \lambda_k(W_p)$, otherwise, $\mu_m\in [0,1]$. Selections for $p$ in a computational environment depend on several factors, including a priori estimates on the error of the forward solver and the  proximity of the current shape to a local minimum. In this work, we chose $p=10^2$ for initial iterations due to experimentally faster convergence rates and the ability to use standard 64 bit floating point arithmetic instead of variable precision arithmetic. Then, during later iterations of refinement, we use $p\in[10^3,3\cdot10^3]$. Although the latter range may seem small, it results in functions $J_p$ that exhibit all of the behaviors we desire from our regularization: lower eigenfunction weights become practically negligible, but upper eigenfunctions with $\lambda_m\approx \lambda_k$ are given similar weights $\mu_M \approx \mu_k$. Since numerical resolution of Dirichlet eigenvalues rarely results in true multiplicity, we must intentionally leave some room for error. \\\\
Computational experiments strongly evidenced the advantages of selecting $J_p$ as our objective function; not only did $J_p$ result in more predictable handling of non-simple eigenvalues, but it also facilitated smoother domain deformations, especially during changes in topology. 
\\\\
The natural next question at hand is the choice of domain representation. Level set methods, as popularized by Osher and Sethian in their 1988 seminal work on front tracking \cite{OsherSethian1988}, are a way to describe the time-dependent motion of an implicit surface as governed by the Hamilton-Jacobi equation. A level set representation of a domain is stored implicitly in the zero set of a so-called level set function (LSF) $\phi$ on that domain. Classically, LSFs are initialized to the signed distance function on a fixed Cartesian grid, and then periodically reinitialized to that state as they lose important properties during the iteration process \cite{OsherFedkiw2003}. An LSF must have stable behavior near the interface and directionally meaningful gradients without sharp variation. Signed distance functions seem to exemplify these properties due to their constant magnitude gradients and relative smoothness. Perhaps more importantly, they are an especially natural choice in the context of their classical use-case: geometric flows. This is due to the fact that their derivatives easily lend to computing surface curvature.     
\\\\
Unfortunately, classical conventions also present two major issues: the resolution of the boundary to a fixed grid and the re-initialization of the signed distance function. These sources of error amplify one another, and extensive studies have been conducted on mitigating the latter \cite{OsherFedkiw2003}. Use cases that are more sensitive to changes in $\phi$, such as our own, require more frequent re-initialization to preserve the necessary characteristics of an LSF across updates. In initial testing with a classical level set approach, it was found that the extreme case of re-initializing every iteration was optimal for boundary tracking in our application.
\\\\
Performing re-initialization this frequently is expensive, and results in compounding errors when resolving the boundary to a fixed grid. It is possible to scale the grid size to mitigate this issue, but doing so significantly increases computational complexity while providing diminishing returns. Furthermore, regardless of size, a fixed Cartesian grid inherently cannot capture the full details of our rather complicated boundary, which is composed of unknown smooth graphs, singular points in higher dimensions, and "cusp" points \cite{kriventsov2017regularitydegenerate} --- the last of which we will discuss in Section 5. 
\\\\
Addressing these concerns brings us to the key new developments of this paper: a grid-less level set approach, and a new choice of LSF we will henceforth call the Laplace LSF. We will start with an introduction to the latter.
\\\\
Let $\phi$ be the solution to the modified torsion problem

\begin{equation}
\begin{cases}
\Delta \phi = 1 & \text{in } \Omega,
\\
\phi = 0 & \text{on } \partial \Omega.
\end{cases}
\end{equation}
\\
Then, $\phi$ naturally defines a desirable LSF for $\Omega$. It is $0$ on the boundary, smooth, and has boundary gradients aligned with the boundary normal vectors. More significantly, this formulation for $\phi$ provides powerful application-specific benefits. Since our forward solver is the mesh-based FEM, the majority of our computational time is spent creating meshes for the current $\Omega$. Once the mesh has been created, however, it can be freely reused to solve multiple PDEs. Thus, computing $\phi$ as per (\theequation) is dramatically subsidized by the pre-existence of the mesh used to solve the eigenproblem. Although meshing is initially expensive, solvers that operate on existing meshes --- such as the implicitly restarted Lanczos method used in this work --- are then able to exhibit strong performance on large, sparse matrices like the FEM matrices \cite{calvetti1994}. Furthermore, in exchange for the initial computational investment, the Delaunay triangulations we use for meshing provide a detailed domain representation that we can take full advantage of using this new formulation for $\phi$.
\\\\
Another benefit of this approach comes from the process by which the LSF quantities, $\phi$ and $\nabla \phi$, and the eigenfunction quantities, $u_1,u_2,...u_k$ and $\nabla u_1,\nabla u_2,...\nabla u_k$, are computed. Using a classical LSF entails computing these functions on differing and likely mismatched sets of nodes, which can lead to significant and compounding interpolative error. In our new approach, however, both types of functions are conveniently calculated on the same set of mesh nodes, resulting in greater accuracy for our shape derivatives and other necessary quantities.
\\\\
Our new definition of $\phi$ is more natural to the geometry of the problem and provides superior accuracy for our use case. Nevertheless, its benefits quickly become restrictions if we must stick to a static domain representation. As a consequence of reusing the FEM mesh, $\phi$ is initially defined only on $\Omega$; there is no ambient grid with points in its complement, $\Omega^c$. Thus, if our $\phi$ was used in the same manner as a classical LSF, the interface would be prevented from expanding. Furthermore, permanently fixing grid points to initial nodal locations, even with some kind of extension mechanism to $\Omega^c$, would significantly hinder the representation of new boundaries with different curvature. This leads us to the introduction of the grid-less level set. 
\\\\
The grid-less level set is a dynamic point cloud defined within a narrow band containing the interface, as is standard numerical practice for classical level sets \cite{OsherFedkiw2003}. This band extends an equal distance into the interior and exterior of $\Omega$ and is composed of multiple layers. For this application, each layer is expanded or contracted in the normal direction from $\partial \Omega$ and separated by $h_{min}$, the smallest edge size in the triangulation mesh. Experimentally, we determined that using a single internal and external layer, for a total of three layers including $\partial \Omega $, provided the best performance and mitigated extrapolation errors. The exact mechanisms of this grid-less approach are best understood through its rootfinding and update procedures. However, to reach that point, we must first the describe the advection process. As usual, the dynamics of the interface are governed by a Hamilton-Jacobi equation of the form:
\\
\begin{equation}
    \frac{\partial \phi}{\partial t} + \vec{V}\cdot\nabla\phi = 0.
\end{equation}
\\
Since $\nabla \phi$ is by construction always normal to $\partial \Omega$, and we are only interested in deformations normal to $\partial \Omega$, we can rewrite this in the more useful form:
\\
\begin{equation}
    \frac{\partial \phi}{\partial t} +V_n|\nabla \phi| = 0.
\end{equation}
\\
Given an arbitrary objective function $J $, we can express its variation with respect to a boundary deformation by $V$ as:
\\
\begin{equation}
    d(J(\Omega)(\Omega, V)) = {\left \langle \frac{\partial J}{\partial V_n}, V_n \right \rangle}_{L^2(\partial \Omega)}.
\end{equation}
\\
Then, to first order, the optimal choice of $V_n$ agrees with $-\frac{\partial J}{\partial V_n}$, i.e.
\\
\begin{equation}
    V_n \big|_{\partial \Omega} = -\frac{\partial J}{\partial V_n}.
\end{equation}
See \cite{OstingKao2013} for proofs of these propositions.
\\\\
Together, these allow us to determine $V_n$ on $\partial \Omega$ for an arbitrary objective function, which in our case we choose to be $J_p$. This gives us the following formula on the boundary:
\\
\begin{equation}
    V_n=-\left(\sum_{i=1}^{k} \lambda_i^p\right)^{\frac{1}{p}}+|\Omega|\left(\sum_{i=1}^{k} \lambda_i^p\right)^{\frac{1-p}{p}}\cdot\left(\sum_{j=1}^{k}\lambda_j^{p-1}\cdot\left(\frac{\partial u_j}{\partial n}\right)^2\right).
\end{equation}
\\
This is merely an expansion of the integrand of (8), i.e.
$$V_n=c\ \sum_{m=0}^{k} \left[\mu_m\ \left(\frac{\partial u_m}{\partial n}\right)^2-1\right]. $$
\\
$V_n$ must now be extended to the entire narrow band where $\phi$ is defined. This is done in a similar manner to \cite{OstingKao2013}. We have two cases:
\begin{enumerate}
    \item The points in question are on the interior of $\Omega$. We denote this region $\Omega^-$. Since the eigenfunctions are defined on this region, we can evaluate the formula as-is. The eigenfunctions are interpolated using the FEM nodal basis.
    \item The points in question are on the exterior of $\Omega$. We denote this region $\Omega^+$. $V$ is extended via a custom $\Theta(n\log n)$ algorithm that assigns each $V_n(\mathbf{x})$ to $V_n(\mathbf{x_o})$, where $\mathbf{x_0}$ is the closest point on $\partial \Omega $ to $\mathbf{x}$ within a mesh-size dependent interval along the boundary. This approach takes advantage of the natural ordering of discrete samples from $\partial \Omega$ within the implementation of the greater program to create a highly efficient procedure. We disregard global comparisons here, as our primary goal is to create locally smooth extensions normal to $\partial \Omega$ while minimizing computational overhead. 
\end{enumerate}

We must also extend $\phi$ and $\nabla \phi$ to the narrow band. This is done  through linear interpolation or extrapolation by using nodal values as our basis. As mentioned earlier, since our points of interest for both the eigenfunctions and the LSF are either shared nodes on $\partial \Omega$, or points where the interpolation is based on the same subset of closest nodes, these approximations will have less error with respect to one another. Furthermore, since layer distances are restricted to the FEM mesh $h_{min}$, it is unlikely first order approximations will generate disproportionate error. 
\\\\
Now comes our optimization step. Choosing a fixed step size to advance $\phi$ quickly leads to poor results. However, for dense meshes, the computational cost of line search algorithms severely outweigh their potential benefits. Consequently, we will determine the step size $\alpha$ using 
\\
\begin{equation}
    \alpha = \alpha_0\frac{||\phi||_{\infty}}{||V_n||_\infty},
\end{equation}
\\
where $\alpha_0 \in \mathbb{R^+}$ is a predetermined multiplier per batch of iterations. The ratio of $L^\infty$ norms ensures a relatively consistent advancement of $\phi$, and thereby stable rootfinding behavior in the procedure to follow. It is especially helpful when $V_n$ is poorly resolved due to potential cusps or changes in topology. Larger values of $\alpha_0$ are selected initially when the approximate shape of a minimizer is unclear. Then, during the refinement process, $\alpha_0$ is gradually reduced alongside $h_{max}$, the largest permitted edge size in the triangulation mesh. 
\\\\
We are now prepared to discuss the domain update procedure. Define a cross section as the set of discretized points along a line normal to $\partial \Omega$. The number of points in a cross sections is equal to the number of layers in our narrow band including the boundary. After updating $\phi$ with a step size $\alpha$, we can compute $\frac{\partial\phi}{\partial n}$ at each point using the appropriate finite differencing scheme. As informational tools, we will also compute the forward differences of $\text{sgn}(\phi)$ and $\text{sgn}\left(\frac{\partial \phi}{\partial n}\right)$ across the cross sections for all but the outermost points.
\\\\
Using these results, we first look for discretized points such that the forward difference of $\text{sgn}(\phi)$ is non-zero. This gives us candidate intervals for interpolating a solution to $\phi(\mathbf{x})=0$. 
\\\\
Alternatively, for cross sections where there is no change in $\text{sgn}(\phi)$, we check if there is a change in $\text{sgn}(\frac{\partial \phi}{\partial n})$ across the cross section. If there is not, we select the appropriate direction for extrapolating a solution to $\phi(\mathbf{x})=0$. 
\\\\
For cross sections that fail both criteria, linear approximations are clearly insufficient, and thus we do not attempt to make them. This leaves us with two cases:
\\
\begin{enumerate}
    \item $\mathbf{Interpolation.}$ First, let $\mathbf{x}$ be the discretized point where the forward difference of $\text{sgn}(\phi)$ changed values. Then, using the central difference result for $\frac{\partial \phi}{\partial n}(\mathbf{x})$,  compute a new boundary point as 
    $$\mathbf{x}^*=\mathbf{x}-\dfrac{\phi(\mathbf{x})}{\frac{\partial \phi}{\partial n}(\mathbf{x})}\ \hat{\mathbf{n}},$$
    \\
    where $\hat{\mathbf{n}}$ is the exterior unit normal vector from the closest boundary point.
    \\
    \item $\mathbf{Extrapolation.}$ The methodology for our extrapolation is very similar to our interpolative step, however there is a key difference in order to limit error. Unlike the interpolative step, in which $\phi$ is known to cross $0$ within the interval of interest under assumptions of continuity, far stronger and most likely inaccurate assumptions would have to be made on $\phi$ to state that it must cross $0$ within the extrapolation interval. Additionally, the error in linear approximation will balloon for large changes in $\mathbf{x}$, as the updated value of $\phi$ is nonlinear. Thus, we choose the following as our extrapolation rule, noting that $\mathbf{x}$ in this case represents either the innermost or outermost point in a cross section:
    $$\mathbf{x}^*=\mathbf{x}-\left[\text{sgn}\left(\frac{\partial \phi}{\partial n}(\mathbf{x})\right)\cdot\min\left(h_{min},\left|\dfrac{\phi(\mathbf{x})}{\frac{\partial \phi}{\partial n}(\mathbf{x})}\right|\right)\right] \hat{\mathbf{n}}.$$
\end{enumerate}

With rootfinding complete, we now move on to domain construction. Select all layer points with $|\phi(\mathbf{x})| < \epsilon$, where $\epsilon$ is chosen as a small constant; for our implementation, we selected $\epsilon=10^{-3}$. Next, concatenate these points with the list of new boundary points computed with the algorithm above, as well as the interior nodes not selected as part of the narrow band. This is our new representative set of points. Before we can move on to the next iteration, however, we will first apply some beneficial postprocessing.
\\\\
These manipulations require the use of alpha shapes. A survey of the details and applications of these objects can be found in \cite{edelsbrunner2011}.
\\\\
Our first objective is to ease changes in topology that result in the creation of disjoint regions. Using an alpha shape, we split our points based on the disjoint regions they represent, if any. The alpha shape is created using $3(h_{max})^2$ as the maximum area for filling internal holes and $3(h_{min})^2$ as the maximum area of regions to be suppressed. These account for erroneous approximations during rootfinding. Then, for each region, we orient the points around their principle axes using a singular value decomposition and perform the following:
\begin{enumerate}[i]
    \item Determine which axis has the most mass concentrated along it. We will call this the primary axis. 
    \item Moving along the primary axis, split the region into several disjoint subregions of roughly equal length.
    \item For each subregion, compute the maximum difference along the secondary axis between any two discretized points in that batch. We will call this the batch width.
    \item Discard any subregions with a batch width smaller than the average edge in the FEM mesh, approximated via $\frac{h_{min}+h_{max}}{2}$.
\end{enumerate}

This process hastens the removal of vestigial features from connected regions that are in the process of splitting into disjoint regions, and generally results in quicker refinement of boundaries undergoing sharp change.
\\\\
Finally, we reconstruct an alpha shape using our new set of points, and rescale the region to ensure that $|\Omega|=1$. This helps improve numerical stability when working with disjoint unions of sets or conducting detailed refinement of candidate minimizers. 
\\\\
This marks the end of the update procedure, and thus the next iteration can begin. In summary, the evolution process is the following:
\begin{enumerate}
    \item Given a representative set of points for $\Omega$, create a mesh using a Delaunay triangulation.
    \item Compute the FEM solution to the eigenvalue problem on the mesh.
    \item Compute the FEM solution to $\Delta \phi=1$ on the mesh with Dirichlet boundary conditions. This solution is our LSF, $\phi$.
    \item Create a narrow band of points to consider in the level set by moving points on $\partial \Omega$ normal to the boundary and in proportion to $h_{min}$.
    \item Compute the deformation field $V_n$ on the narrow band based on the objective function $J_p$ and the solutions from the above steps. $V_n$ is properly defined on $\partial \Omega$, extended to $\Omega^-$ through direct evaluation with interpolated quantities, and extended to $\Omega^+$ via a simplified velocity extension method.
    \item Advect $\phi$ by $V_n$ as per the Hamilton-Jacobi equation with a step size proportional to the ratio of $||\phi||_\infty$ and $||V_n||_\infty$, as shown in (16).
    \item Perform rootfinding via a combination of interpolation and limited extrapolation.
    \item Post-process and rescale the new representative set of points to allow for smoother changes in topology.
\end{enumerate}
\vspace{12 pt}
To begin the iteration process, we either select a disk, a previous connected minimizer, or some perturbation of either. This perturbation is conducted by deforming the boundary using a $\sin$ function of low magnitude and random angular frequency $\omega\in[1,3]$. In the refinement process, we gradually lower $h_{max}$ and $\alpha_0$, increase $p$, and occasionally apply the above perturbation to escape local minima.
\bigskip
\section{Theoretical Results for the Regularized Objective Function}
\bigskip
The numerical techniques discussed above were reliant on the assumption that minimizing our regularized approximation to $\lambda_k$ using the $p$-norm, i.e. $(5)$, would nonetheless produce true minimizers of $\lambda_k$ as $p\to \infty$. We also asserted that such an approximation was well suited to the minimization of non-simple eigenvalues due to an invariance across orthonormal bases for eigenfunctions. This section is dedicated to formally proving those notions.
\\
\begin{theorem}
    The derivative of $J_p$ with respect to a deformation of the domain, i.e. (8), is invariant under orthonormal changes of basis for eigenfunctions. 
\end{theorem}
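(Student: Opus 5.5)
The plan is to write the integrand of (8) explicitly and show that its dependence on the eigenfunctions is only through quantities that are unchanged by an orthonormal change of basis within each eigenspace. Differentiating $J_p=|\Omega|\big(\sum_{m=1}^k\lambda_m^p\big)^{1/p}$ by the chain rule, and using (6) and (7), gives the form (14). Concretely, the density against $V\cdot n$ on $\partial\Omega$ is
$$-\Big(\sum_{i=1}^k\lambda_i^p\Big)^{1/p}+|\Omega|\Big(\sum_{i=1}^k\lambda_i^p\Big)^{\frac{1-p}{p}}\sum_{j=1}^k\lambda_j^{p-1}\Big(\frac{\partial u_j}{\partial n}\Big)^2 .$$
The first factor and the prefactors depend only on $|\Omega|$ and the eigenvalues. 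So the whole question reduces to the term $\sum_{j=1}^k\lambda_j^{p-1}(\partial_n u_j)^2$.

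First I group the indices $1,\dots,k$ into blocks $I_\lambda=\{m\le k:\lambda_m=\lambda\}$, one block for each distinct eigenvalue $\lambda$. Within a block the weight $\lambda_j^{p-1}=\lambda^{p-1}$ is constant, so the sum becomes
$$\sum_\lambda \lambda^{p-1}\sum_{j\in I_\lambda}(\partial_n u_j)^2 .$$
An orthonormal change of eigenfunction basis acts blockwise: $\tilde u_i=\sum_{j\in I_\lambda}Q_{ij}u_j$ with $Q$ orthogonal. Since $\partial_n$ is linear, $\partial_n\tilde u_i=\sum_j Q_{ij}\partial_n u_j$ pointwise on $\partial\Omega$. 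Then $\sum_i(\partial_n\tilde u_i)^2=|Q\,\partial_n\mathbf u|^2=|\partial_n\mathbf u|^2$ pointwise. Hence the integrand, and so (8), is invariant.

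The main obstacle is the block at the top, $\lambda=\lambda_k$, when its eigenspace extends beyond index $k$, i.e.\ $\lambda_{k+1}=\lambda_k$. In that case the truncation at $k$ cuts the eigenspace, and a rotation mixing $u_k$ with $u_{k+1}$ would change the sum. I will handle this in two steps.
\begin{enumerate}
\item I state the theorem for bases that respect the index set $\{1,\dots,k\}$, meaning orthonormal changes within the span of $u_1,\dots,u_k$. This span is well defined whenever $\lambda_k<\lambda_{k+1}$.
\item In the cut case I note that $J_p$ itself is then only directionally differentiable. The formula (8) must be read as the derivative of the symmetric function over the full truncated cluster, following the standard treatment of multiple eigenvalues.
\end{enumerate}

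A second, milder point is justifying (8) for multiple eigenvalues in the first place, not only its invariance. Here I would argue as follows. $\sum_{m=1}^k\lambda_m^p$ is a symmetric function of the eigenvalues in each complete cluster. The cluster sums $\sum_{j\in I_\lambda}\lambda_j$ are differentiable, with derivative $-\int_{\partial\Omega}\sum_{j\in I_\lambda}(\partial_n u_j)^2V\cdot n$. This derivative is independent of the basis by the computation above, and it is consistent with the simple-eigenvalue formula (6).
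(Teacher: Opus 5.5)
Your proposal is correct and is essentially the paper's proof. The paper groups equal eigenvalues, which carry equal weights, and uses $\sum_i Q_{ij}Q_{is}=\delta_{js}$ on the integrated bilinear form $B(u,v)=\int_{\partial\Omega}\partial_n u\,\partial_n v\,V\cdot n\,d\sigma$; you apply the same orthogonality identity pointwise to $\sum_i(\partial_n\tilde u_i)^2$, which also shows directly that the velocity (14) itself is basis-independent. Your caveat about the top cluster being cut when $\lambda_{k+1}=\lambda_k$ is a real hypothesis that the paper leaves implicit, since it takes $m$ to be the full multiplicity, so restricting to orthonormal changes within index blocks inside $\{1,\dots,k\}$ is the right way to state the result.
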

\begin{proof}
 Consider the symmetric bilinear form
 $$B(u,v) = \int_{\partial \Omega} \frac{\partial u}{\partial n} \frac{\partial v}{\partial n} V\cdot n \; d\sigma,$$
 where $\Omega \subset \mathbb{R}^N$, $u,v\in L^2(\Omega)$, $V$ is a deformation field of $\mathbb{R}^N$, and $n$ is the unit normal vector. 
 \\\\
 In (8), eigenvalues are enumerated with multiplicity, so equal eigenvalues $\lambda_i=\lambda_j$ implies equal weights $\mu_i=\mu_j$. Thus, (8) contains terms of the form
 $$\mu_k \sum_{i=1}^mB(u_i,u_i),$$
 where $\mu_k$ is the common weight and $m$ is multiplicity of the eigenvalue.
 \\\\
 An orthonormal change of basis for the eigenvectors can be written as
 $$v_i=\sum_{j=1}^mQ_{ij}u_j,$$
 where $\mathbf{Q}$ is the change of base matrix.
 \\\\
 Thus,
 $$\sum_{i=1}^m B(v_i,v_i)=B \left(\sum_{j=1}^mQ_{ij}u_j,\sum_{s=1}^mQ_{is}u_s\right)=\sum_{i}\sum_{j,s}Q_{ij}Q_{is}B(u_j,u_s).$$
 \\
 Now, since $\mathbf{Q}$ is an orthogonal matrix,
 $$\sum_iQ_{ij}Q_{is}=\delta_{js,}$$
 where $\delta_{js}$ is the Kronecker delta.
 \\\\
 Applying that information gives us
 $$\sum_{i=1}^m B(v_i,v_i)=\sum_{j,s}\delta_{js}B(u_j,u_s)=\sum_{i=1}^m B(u_i,u_i).$$
 \\
 Therefore, sums of $B(u_j,u_j)$ across a basis are invariant under orthonormal changes of basis. Since (8) only depends on its choice of eigenfunctions insofar as it depends on this sum, it too is invariant under orthonormal changes of basis.
\end{proof}
\bigskip
\bigskip
We now move on to the task of proving that minimizers of $J_p$ converge to the minimizers of $J$ as $p\to\infty$. In order to do so, we must first introduce a few notions of convergence. In the following, $Y$ denotes a separable metric space. 
\\
\begin{definition}
Given a sequence ($G_n$) of functionals from $Y$ into $\overline{\mathbb{R}}$ we say that ($G_n$) $\Gamma$-converges to a functional $G$ if for every $y\in Y$ we have:
\begin{enumerate}[i]
    \item $\forall y_n \to y \;\;\;\;G(y)\leq \liminf_{n\to+\infty}G_n(y_n)$
    \item $\exists y_n \to y \;\;\;\;G(y)\geq \limsup_{n\to+\infty}G_n(y_n)$
\end{enumerate}
See \cite{dalmaso1993} for further details on $\Gamma$-convergence.
\end{definition}
\begin{definition}
    Consider a set $U$ with no topological structure given a priori, and a state functional $G:Y\to\mathbb{\overline R}$ with the properties:
    \begin{enumerate}[i]
        \item for every $u \in U$ the function $G(u,\cdot)$ is lower semicontinuous in the space $Y$
        \item $G$ is equi-coercive in the sense that there exists a coercive lower semicontinuous functional $\Psi: Y\to \mathbb{\overline R}$ such that
        $$G(u,y) \geq \Psi(y) \quad \forall u \in U, \; \forall y \in Y$$
        \item the mapping $\Gamma_G(u) = G(u,\cdot)$ is one-to-one
    \end{enumerate}
    \medskip
    Then, we say that $u_n \to u$ in $U$ if functionals $G(u_n,\cdot)$ $\Gamma$-converge to $G(u,\cdot)$ in $Y$. This convergence on $U$ will be called $\gamma$-convergence.  
\end{definition}
Moving closer to the technicalities of our specific discussion, we will need to introduce the notion of the Sobolev capacity of a subset $E$ in $\mathbb{R}^N$ defined by 
$$\text{cap}(E)=\inf\{\int_{\mathbb{R^N}}|\nabla u|^2 + u^2 dx : u\in \mathcal{U}_E\},$$
where $\mathcal{U}_E$ is the set of all functions $u$ of the Sobolev space $H^1(\mathbb{R}^N)$ such that $u\geq1$ almost everywhere in a neighborhood of $E$. 
\\\\
Then, a subset $A$ of $\mathbb{R}^N$ is said to be quasi-open if for every $\varepsilon > 0$ there exists an open subset $A_\varepsilon$ of $\mathbb{R^N}$ such that cap$(A_\varepsilon \triangle A)\ < \varepsilon$, where $\triangle$ denotes the symmetric difference of sets.
\\\\
In a similar vein, a function $f: D\to \mathbb{R}$ is said to be quasi-continuous if for every $\varepsilon>0)$ there exists a continuous function $f_\varepsilon:D\to\mathbb{R}$ such that cap$(\{f \neq f_\varepsilon\}) < \varepsilon$.
\\\\
For the rest of our definitions and results, we will now consider the admissible class 
\\
$$\mathcal{A}=\{A\subseteq D : A \text{ is quasi-open}\},$$
\\
where $D$ is a bounded open subset of $\mathbb{R}^N$.
\\\\
Although the original problem from (1) is not constrained to a bounded $D$, it is actually quite reasonable for us to focus on the bounded case. It has been proven that, even in the general setting with $\mathbb{R}^N$ as the domain, all minimizers of (2) are bounded and have finite perimeter \cite[Theorem 3.3]{Bucur2012}. Furthermore, numerical approaches will always use an implicit $D$ by necessity. Thus, constraining the problem to $D$ is natural for our purposes and helps simplify later arguments. 
\\\\
Moving on, denote by $w_A$ the solution to the torsion problem
$$
\begin{cases}
-\Delta w_{A} = 1 & \text{in } A,
\\
w_A = 0 \in H^1_0(A).
\end{cases}
$$
\\
\begin{definition}
We say that a sequence ($A_n$) of $\mathcal{A}$ weakly $\gamma$-converges to $A \in \mathcal{A}$ if $w_{A _n}$ converges weakly in $H^1_0(D)$ to a function $w \in H^1_0(D)$ that we may take quasi-continuous such that $A = \{w > 0\}$.
\end{definition}
Note that $w$ does not necessarily coincide with $w_A$ unless $A_n$ $\gamma$-converges to $A$, and that taking $w$ to be quasi-continuous implies that $A$ is always quasi-open \cite[Definition 5.3.1]{Bucur2005-kd}. Since we treat $-w_{A_n}$ as LSFs in our numerical work, these stipulations are not a concern; only elements of the zero set of $w$ in proximity to $\{w > 0\}$ are relevant to numerically reconstructing $A$. That is to say, within a numerical context, $-w$ is a practically optimal level set for $A$. 
\\
\begin{theorem}
As $p \to \infty$, the volume-constrained minimizers of (5) over $\mathcal{A}$ weakly $\gamma$-converge to the volume-constrained minimizers of (2). 
\end{theorem}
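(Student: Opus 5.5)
The proof runs through the direct method in the weak $\gamma$-topology on $\mathcal{A}$, with $D$ fixed and $|A|=c$, together with an elementary sandwich between $J_p$ and $J$. For quasi-open $A\subseteq D$ with $|A|=c$, enumerating eigenvalues with multiplicity gives
$$\lambda_k(A)\le\Big(\sum_{m=1}^k\lambda_m(A)^p\Big)^{1/p}\le k^{1/p}\lambda_k(A).$$
Hence $c\,\lambda_k\le J_p\le k^{1/p}c\,\lambda_k$, so $J_p\to c\,\lambda_k$ uniformly on the constraint set as $p\to\infty$, up to the normalizing power of $|A|$. Since $|A|=c$ is fixed, minimizing (2) amounts to minimizing $\lambda_k$ over $\{A\in\mathcal{A}:|A|=c\}$. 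By the Buttazzo--Dal Maso existence theorem, the weakly $\gamma$-lower semicontinuous and decreasing functional $\lambda_k$ attains its minimum on $\{|A|\le c\}$, and by monotonicity also on $\{|A|=c\}$. The same theorem gives a minimizer $A_p$ of $J_p$ for each $p$, since $J_p$ is a continuous increasing function of $(\lambda_1,\dots,\lambda_k)$ and is therefore also decreasing and weakly $\gamma$-l.s.c.

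Compactness comes next. The torsion functions $w_{A_p}$ satisfy $\|w_{A_p}\|_{H^1_0(D)}\le C(D)$, because $\int|\nabla w_A|^2=\int w_A\le\int w_D$. Weak compactness of $H^1_0(D)$ then gives a subsequence $w_{A_{p_j}}\rightharpoonup w$. Taking $w$ quasi-continuous and setting $A=\{w>0\}$, we get $A_{p_j}\to A$ weakly $\gamma$ in the sense of the Definition above. Rellich gives strong $L^2$ convergence, and with Fatou (or the standard fact $1_A\ge\limsup 1_{A_n}$ a.e.) this yields $|A|\le\liminf|A_{p_j}|=c$. I then need the key fact that $\lambda_k$ is lower semicontinuous along weak $\gamma$-convergence, namely $\lambda_k(A)\le\liminf\lambda_k(A_{p_j})$. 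I will take this from Bucur--Buttazzo: weak $\gamma$-limits are dominated by the $\gamma$-limit of a larger sequence of relaxed measures, and $\lambda_k$ is $\gamma$-continuous and decreasing with respect to the measure order.

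For the chain of inequalities, let $A^*$ be any volume-constrained minimizer of $\lambda_k$. Then
$$c\,\lambda_k(A)\le\liminf_j c\,\lambda_k(A_{p_j})\le\liminf_j J_{p_j}(A_{p_j})\le\liminf_j J_{p_j}(A^*)\le\lim_j k^{1/p_j}c\,\lambda_k(A^*)=c\,\lambda_k(A^*).$$
If $|A|<c$, I enlarge $A$ to a quasi-open $\tilde A\subseteq D$ with $|\tilde A|=c$, which is possible since $|D|>c$. Monotonicity gives $\lambda_k(\tilde A)\le\lambda_k(A)\le\lambda_k(A^*)$. So $A$, or its enlargement, is a volume-constrained minimizer of (2). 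Equivalently, this is $\Gamma$-convergence of $J_p$ to $c\lambda_k$ under weak $\gamma$, proved with the constant recovery sequence, plus equicoercivity. Since every subsequence has a further subsequence converging to a minimizer, the statement follows in the usual sense that cluster points of minimizers are minimizers.

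The main obstacle is the lower semicontinuity of $\lambda_k$ under weak (rather than strong) $\gamma$-convergence, together with the loss of volume in the limit. I would handle both by citing the Buttazzo--Dal Maso framework rather than reproving it. If a self-contained argument is wanted, I would pass to the $\gamma$-limit measure $\mu$, use $\lambda_k(A)\le\lambda_k(\mu)$, which holds because $\{w>0\}$ contains the regular set of $\mu$, and use continuity of $\lambda_k$ under $\gamma$-convergence of measures via compactness of the resolvents $R_{A_n}\to R_\mu$ in operator norm on $L^2(D)$.
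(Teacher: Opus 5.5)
Your proposal is correct and follows the paper's proof step for step: existence of $J_p$-minimizers by Buttazzo--Dal Maso, weak $\gamma$-compactness, $F\le F_p$ together with evaluation at a fixed minimizer to get $\lim\min F_p=\min F$, weak $\gamma$-lower semicontinuity of $\lambda_k$ and of the volume, and the same closing chain of inequalities. The differences are minor: you prove compactness and the volume bound directly, you obtain lower semicontinuity of $\lambda_k$ from the relaxed-measure $\gamma$-limit rather than from the paper's sets $G_n\supseteq A_n$ with $G_n\to A$ in $\gamma$, and you handle possible volume loss by enlarging inside $D$ where the paper dilates, with one slip: the a.e.\ fact behind your volume bound is $1_A\le\liminf_j 1_{A_{p_j}}$ (from $w_{A_{p_j}}\to w$ a.e.), not $1_A\ge\limsup_j 1_{A_{p_j}}$, which would give the opposite inequality.
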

\begin{proof}
    Consider the functionals 
$$F_p(\Omega)\equiv \left(\sum_{j=1}^{k}\lambda_j(\Omega)^p\right)^{\frac{1}{p}}$$
and
$$F(\Omega)\equiv\lambda_k(\Omega),$$
\\
where $\Omega \in \mathcal{A}$ and $\lambda_k$ is the $k$th Dirichlet eigenvalue. These correspond to computational objective functions (5) and (2) respectively.
\\\\
It is known that the mapping $\Omega \mapsto\lambda _k(\Omega)$ is monotonically decreasing with respect to set inclusion and also continuous with respect to the $\gamma$-convergence \cite[Definition 3.3.1, Example 5.4.2]{Bucur2005-kd}. Since the mapping for each $\lambda_k$ has these properties individually, their $p$-norm, $F_p$, must have them as well. Consequently, each $F_p$ must also admit a volume-constrained minimizer over $\mathcal{A}$  \cite[\text{Theorem 5.4.1}]{Bucur2005-kd}. Let us call the minimizer to this problem $W_p$.
\\\\
Next, due to the fact that the weak $\gamma$-convergence on $\mathcal{A}$ is sequentially compact \cite[Proposition 5.3.4]{Bucur2005-kd}, there exists a set $V\in\mathcal{A}$ such that, up to a subsequence, (which we will not relabel for clarity), $W_{p}\xrightarrow[]{w\gamma}V$. We will return to this convergence later.
\\\\
Now that we know volume-constrained minimizers of $F_p$ exist, we can also show that the minimum value of $F_p$ converges to the minimum value of $F$ as $p\to \infty$. Let $W$ be a volume-constrained minimizer for $F$, i.e, $\forall\Omega \in \mathcal{A}$
$$\min_{|\Omega|=1}F=F(W).$$
Then, it is a matter of simple point-wise convergence that
$$\lim_{p\to\infty}F_p(W)=F(W),$$ 
which leads to 
$$\limsup_{p\to\infty}\min_{|\Omega|=1} F_p \leq \lim_{p\to\infty}F_p(W) = F(W).$$
\\
However, by the properties of the $p$-norm, we also know that 
$$F_p(\Omega)\geq F(\Omega)$$
for all $p$ and $\Omega$, which necessitates
\begin{equation}
\lim_{p\to\infty}\min_{|\Omega|=1} F_p = \min_{|\Omega|=1}F.
\end{equation}
\\\\
To complete our preparations, we will now show that $F$, and by consequence $F_p$, is weak $\gamma$-lower semicontinuous. Consider the weak $\gamma$-convergence of quasi-open sets $A_n\xrightarrow[]{w\gamma}A$. Then, there exists a sequence of open sets $G_n\subseteq D$ such that, up to a subsequence, $G_n\xrightarrow[]{\gamma}A$ and $A_{n}\subseteq G_n$ \cite[Lemma 4.3.15, Definition 5.3.1]{Bucur2005-kd}. The $\gamma$-continuity of $F$ gives us
$$F(A)\leq\liminf_{n\to\infty}F(G_n).$$
\\
Then, since $F$ is monotonically decreasing with respect to set inclusion, we also have
$$F(A)\leq\liminf_{n\to\infty}F(G_n)\leq\liminf_{n\to\infty}F(A_n),$$
\\
which by definition implies $F$ is weak $\gamma$-lower semicontinuous. 
\\\\
This lets us move on to our final result. Returning to the fact that $W_{p}\xrightarrow[]{w\gamma}V$, by the weak $\gamma$-lower semicontinuity of $F$, we have
$$F(V)\leq \liminf_{p\to\infty}F(W_p).$$
Then, since $F_p(\Omega) \geq F(\Omega)$ for all $p$ and $\Omega$, we can also state that
$$F(V)\leq \liminf_{p\to\infty} F_p(W_p).$$
\\
Now, since the elements $W_p$ are defined as the minimizers for $F_p$, and we know from (\theequation) that the volume-constrained minima of our functionals are equal under a limit, it follows that
$$F(V)\leq  \liminf_{p\to \infty} \min_{|\Omega|=1} F_p =\min_{|\Omega|=1}F.$$
\\
Fortunately, we know that $|V| \leq 1$ from the weak $\gamma$-lower semicontinuity of the Lebesgue measure \cite[Proposition  5.3.6]{Bucur2005-kd}. Therefore, since $F$ is monotonically decreasing with respect to dilations of set volume, the optimality of $F(V)$ implies that $V$ must minimize $F$ subject to the volume constraint.
\end{proof}
\bigskip
\section{Disjoint Regions} {
\bigskip
In this section, we discuss the problem of finding optimal disjoint minimizers for our objective functions of interest. These can be interesting to study even if they do not produce an apparent minimum. Furthermore, optimal disjoint minimizers are relatively simple to find; they are simply combinations of existing connected minimizers.    
\\\\
Recall we defined $\lambda^*_k$ as the minimal $k$th Dirichlet eigenvalue across all $\Omega \subset \mathbb{R}^N$ such that $|\Omega|=1$. Now, assume that $\Omega^*_k$ is the union of at least two disjoint sets with positive measure. Then:
\begin{equation}
(\lambda_k^*)^{N/2} = (\lambda_i^*)^{N/2} + (\lambda_{k-i}^*)^{N/2}
= \min_{1 \le j \le (n-1)/2} \left( (\lambda_j^*)^{N/2} + (\lambda_{k-j}^*)^{N/2} \right),
\end{equation}
where $i$ is a value of $j$ that satisfies the minimum.
\\ 
Furthermore, 
\begin{equation}
    \Omega^*_k=\big[ ( \frac{\lambda^*_i}{\lambda^*_k})^{1/2} \; \Omega^*_i \big] 
    \cup 
    \big[ ( \frac{\lambda^*_{k-i}}{\lambda^*_k})^{1/2} \; \Omega^*_{k-i} \big].
\end{equation}
\\
The proofs for these statements can be found in \cite{henrot2006}.
\\\\
Together, (17) and (18) allow us to compute the optimal disjoint minimizer for an arbitrary $\lambda_k$ given minimizers for $\lambda_1,\lambda_2,...,\lambda_{k-1}$. The accuracy of this resulting minimizer of course depends on the accuracy of the preceding minimizers used to compute it. Nevertheless, this process provides us with a powerful tool. 
\\\\
We now extend this result to find the optimal disjoint minimizers of $J$ and $J_p$ respectively in the two dimensional case ($N=2$). First, consider the simple case of $J(\Omega)=|\Omega|\lambda_k(\Omega)$. This proof justifies some of our choices in computational implementation. Given the initial condition $|\Omega|=1$, we can rewrite (17) as 
\\
$$\lambda_k^*|\Omega^*_k| = \lambda_i^*|\Omega^*_i| + \lambda_{k-i}^*|\Omega^*_{k-i}|
= \min_{1 \le j \le (n-1)/2}  \lambda_j^*|\Omega^*_j| + \lambda_{k-j}^*|\Omega^*_{k-j}|, $$
which, then, by the definition of $J$, becomes
\\
\begin{equation}
    J(\Omega^*_k) = J(\Omega^*_i) + J(\Omega^*_{k-i})
= \min_{1 \le j \le (n-1)/2} J(\Omega^*_j) + J(\Omega^*_{k-j}).
\end{equation}
\\
Using similar reasoning, we will rewrite (18) as 
\\
$$
\Omega^*_k=\left[ \left( \frac{\lambda^*_i|\Omega^*_i|}{\lambda^*_k|\Omega^*_k|}\right)^{1/2} \; \Omega^*_i \right] 
    \cup 
    \left[ \left( \frac{\lambda^*_{k-i}|\Omega^*_{k-i}|}{\lambda^*_k|\Omega^*_k|}\right)^{1/2} \; \Omega^*_{k-i} \right],
$$
which then becomes
\\
\begin{equation}
    \Omega^*_k=\left[ \left( \frac{J(\Omega^*_i)}{J(\Omega^*_k)}\right)^{1/2} \; \Omega^*_i \right] 
    \cup 
    \left[ \left( \frac{J(\Omega^*_{k-i})}{J(\Omega^*_{k})}\right)^{1/2} \; \Omega^*_{k-i} \right].
\end{equation}
\\
Finding a minimizing pair for (19) proves preferable to the volume-constrained (17), as it allows us to account for numerical error in the recomputation of $|\Omega|$. This benefit also applies when computing (20) as opposed to (18).
\\\\
Now, we consider minimizing $J_p(\Omega) = |\Omega|(\sum_{n=1}^{k} (\lambda_n)^p)^{\frac{1}{p}}$.
First, note that, if $\Omega$ is a disjoint union of the form $\Omega=\Omega_i\cup\Omega_j$, then
$$J_p(\Omega)=(|\Omega_i|+|\Omega_j|)(\sum_{n=1}^i\lambda_{n,i}^p+\sum_{n=1}^j\lambda_{n,j}^p)^{1/p},$$
where we use $\lambda_{n,i}$ to denote the $nth$ Dirichlet eigenvalue on $\Omega_i^*$, the region  associated with $\lambda_i^*$.
\\\\
Our task then becomes
$$\min\;\; (c_i|\Omega_i|+c_j|\Omega_j|)(\frac{1}{c_i^p}\sum_i\lambda_i^p+\frac{1}{c_j^p}\sum_j\lambda_j^p)^{1/p}.$$
Taking the gradient with respect to the coefficients and setting it equal to $\bf{0}$ gives us
$$\frac{1}{p}(\frac{1}{c_i^p}\sum_i\lambda_i^p+\frac{1}{c_j^p}\sum_j\lambda_j^p)^{1/p-1}\cdot(\frac{-p}{c_i^{p+1}}\sum_i\lambda_i^p)=-|\Omega_i|(\frac{1}{c_i^p}\sum_i\lambda_i^p+\frac{1}{c_j^p}\sum_j\lambda_j^p),$$
and
$$\frac{1}{p}(\frac{1}{c_i^p}\sum_i\lambda_i^p+\frac{1}{c_j^p}\sum_j\lambda_j^p)^{1/p-1}\cdot(\frac{-p}{c_j^{p+1}}\sum_i\lambda_i^p)=-|\Omega_j|(\frac{1}{c_i^p}\sum_i\lambda_i^p+\frac{1}{c_j^p}\sum_j\lambda_j^p).$$
Since we know the relevant terms to be nonzero, we may divide the two and obtain
$$\frac{c_j^{p+1}\sum_i\lambda_i^p}{c_i^{p+1}\sum_j\lambda_j^p}=\frac{|\Omega_i|}{|\Omega_j|}.$$
By imposing our volume constraint $c_i|\Omega_i|+c_j|\Omega_j|=1$, we can rearrange and receive
$$\left(\frac{1-c_i|\Omega_i|}{c_i|\Omega_j|}\right)^{p+1}=\frac{|\Omega_i|\sum_j\lambda_j^p}{|\Omega_j|\sum_i\lambda_i^p},$$
and
$$\left(\frac{1-|\Omega_i|c_i}{c_i|\Omega_i|}\right)^{p+1}=\frac{|\Omega_j|^p\sum_j\lambda_j^p}{|\Omega_i|^p\sum_i\lambda_i^p}.$$
Now, let
\begin{equation}
    \chi = \left(\frac{|\Omega_j|^p\sum_j\lambda_j^p}{|\Omega_i|^p\sum_i\lambda_i^p}\right)^\frac{1}{p+1}=\left(\frac{J_p(\Omega_j)}{J_p(\Omega_i)}\right)^\frac{p}{p+1}.
\end{equation}
\\
This quantity can easily be calculated from a pair of saved minimizers. 
\\
Then, our coefficients simplify to the following:
$$c_i=\frac{1}{|\Omega_i|\left(1+\chi\right)},$$
and
$$c_j=\frac{\chi}{|\Omega_j|\left(1+\chi\right)}.$$
\\
Accordingly, we can write the optimal disjoint union for minimizing $J_p$ as
\begin{equation}
    \Omega_{k,p}^*=\left[\left(\frac{1}{|\Omega_i|\left(1+\chi\right)}\right)^{\frac12}\Omega_{i,p}^*\right]\cup\left[\left(\frac{\chi}{|\Omega_j|\left(1+\chi\right)}\right)^{\frac12}\Omega_{j,p}^*\right].
\end{equation}
\\\\
Both of these results were converted into algorithms and used to compare optimal disjoint regions to the best known connected minimizers. In general, all minimizers of $J_p$ for lower values of $p$ are connected, likely due to the influence of the first eigenfunction. The frequency of disjoint minimizers for $J_p$ increases as $p$ increases. Results for the standard objective function $J$, meanwhile, are visible in the next section.
}

\section{Numerical Results}
\bigskip
\begin{center}
\rmfamily{TABLE 1. Numerical Results for the Minimal Dirichlet Eigenvalues with $|\Omega|=1$}
\\
\vspace{.25 cm}
\setlength{\arrayrulewidth}{0.5mm}
\setlength{\tabcolsep}{18pt}
\renewcommand{\arraystretch}{1}
\begin{tabular}{ |p{.2cm}||p{1.25cm}|p{1.65cm}|p{1.65cm}|  }
 \hline
 $k$ & Classical Level Set FEM \cite{Oudet2010} & MFS (Best Known Results) \cite{AntunesFreitas2012} & Laplace LSF FEM \\
 \hline
 5 & 78.47 & 78.20 & 78.20 \\
 6 & 88.96 & 88.52 & 88.50 \\
 7 & 107.47 & 106.14 & 106.14 \\
 8 & 119.9 & 118.90 & 118.92 \\
 9 & 133.52 & 132.68 & 132.45 \\
 10 & 143.45 & 142.72 & 142.72 \\
 11 & - & 159.39 & 159.43 \\
 12 & - & 172.85 & 172.92 \\
 13 & - & 186.97 & 186.96 \\ 
 14 & - & 198.96 & 199.02 \\
 15 & - & 209.63 & 209.64 \\
 16 & - & - & 227.94 \\
 17 & - & - & 241.71 \\
 18 & - & - & 255.70 \\
 19 & - & - & 268.87 \\ 
 20 & - & - & 285.44\\
 \hline
\end{tabular}
\\
\end{center}
\begin{figure}
    \centering
    \caption{Minimizers 1-25 in Row Major Order}
    \makebox[\textwidth]{\includegraphics[width=.6\paperwidth]{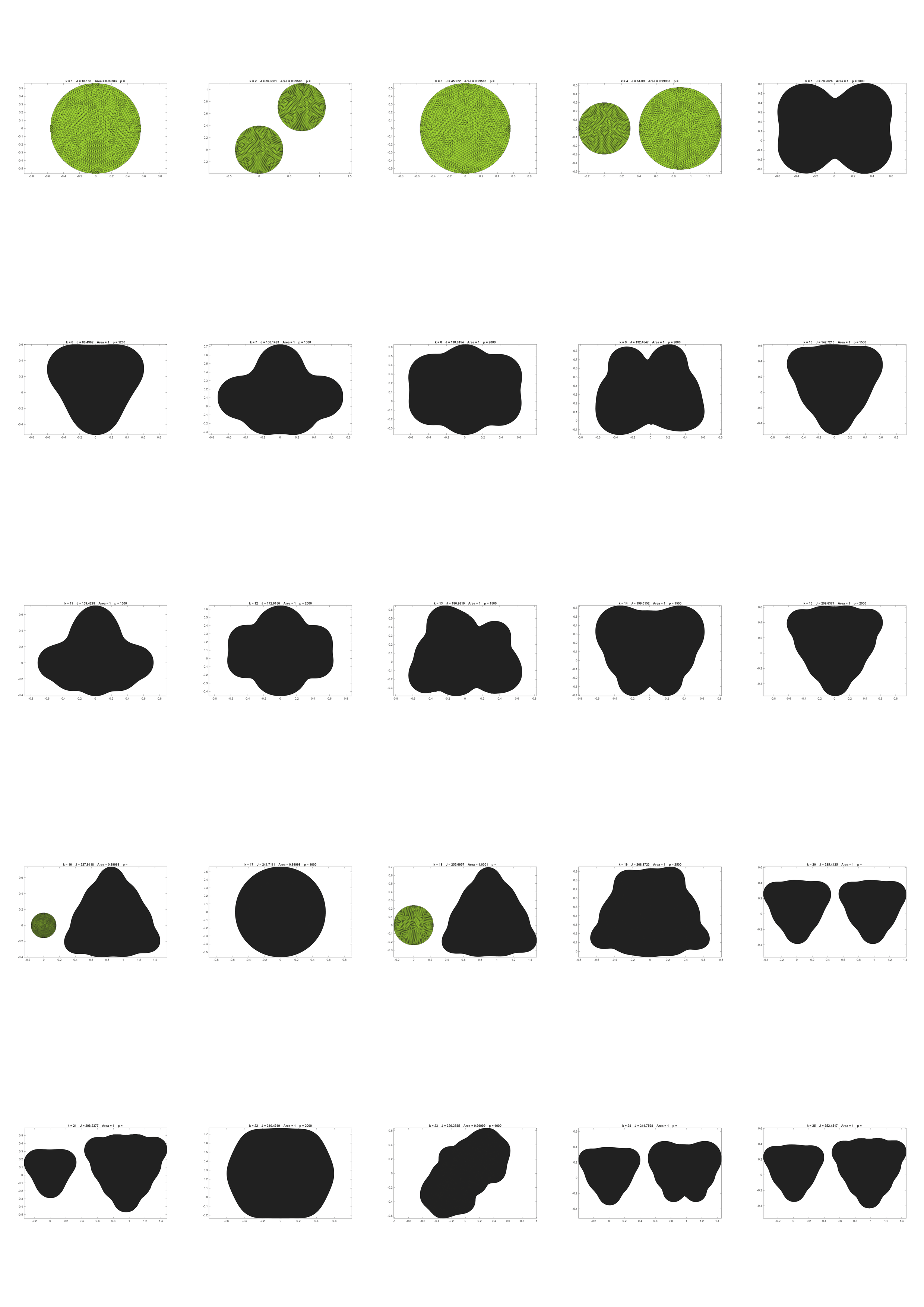}}
    \label{fig:min_1-25}
\end{figure}
\begin{figure}
    \centering
    \caption{Minimizers 26-50 in Row Major Order}
    \makebox[\textwidth]{\includegraphics[width=.6\paperwidth]{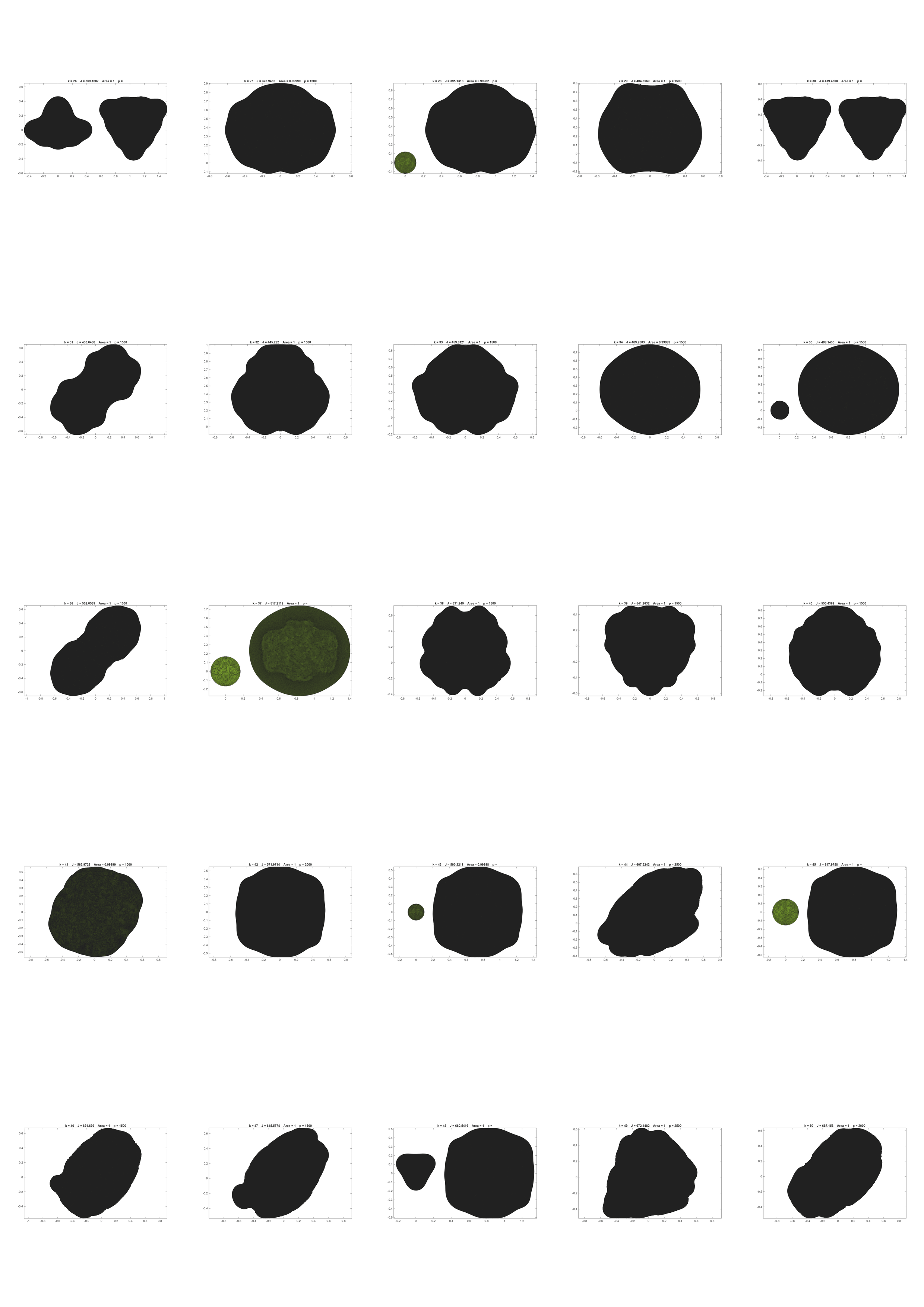}}
    \label{fig:min_26-50}
\end{figure}
\bigskip
\bigskip
With the aid of our general results, we may now investigate some more specific problems within the field.
\bigskip
\subsection{\space \space Cusps on the Boundary.}

As briefly mentioned earlier, the boundary of a minimizer, $\partial \Omega^*$, admits a decomposition into three regions: a regular and smooth component $\partial^*\Omega^*$, a set of singular points $Z_{AC}$, and a set of cusp points $Z_C$ \cite{kriventsov2017regularitydegenerate}. The authors who proved this decomposition focused primarily on the reduced boundary and $Z_{AC}$, as little could be said about the last set, $Z_C$, except that if it is not empty, then, depending on the rate of opening of the cusp, it may also not be regular \cite{kriventsov2017regularitydegenerate}. Since analytical methods for investigating this property are limited, analyzing the numerical results presented in this work may provide additional insight. Based on the Euler-Lagrange equation, such cusp points may be identified by locating boundary points where all of the eigenfunctions associated with the $k$th eigenvalue and its multiplicities vanish. The zero sets for $k=13$ make it the most likely candidate for a minimizer that exhibits a cusp on the boundary\footnote{All zero sets can be viewed at \url{}{https://github.com/AtharvThakur0/Laplace-LSF-Dirichlet-Eigenvalue-Numerics/tree/main/ZeroSetPlots}}.
\\

\begin{figure}
    \centering
    \caption{Uppermost Eigenfunction Zero Sets for the Minimizer of $\lambda_{13}$}
    \makebox[\textwidth]{\includegraphics[width=.8\paperwidth]{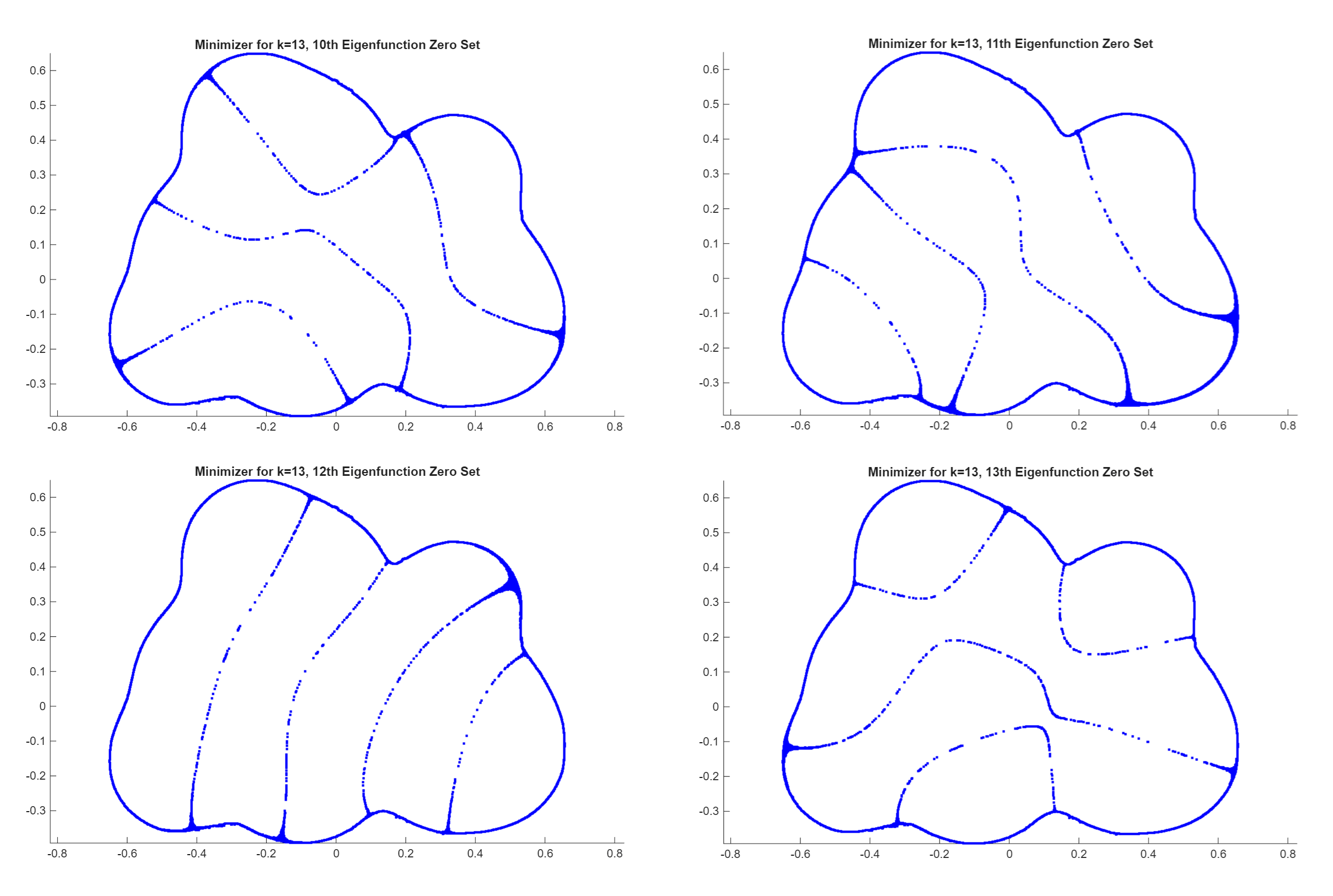}}
    \label{fig:zs13}
\end{figure}

The 5 uppermost computed eigenvalues on this region were 160.55, 186.57, 186.81, 186.89, and 186.96. The slight drift away from the candidate cusp at the top across the upper four zero sets could be attributed to the numerical error between values, as the theoretical minimizer for $k=13$, and indeed any $k>1$, likely has a non-simple $k$th eigenvalue.
\\\\
Another possible interpretation is that $Z_C$ is in fact empty, at least for $1\leq k\leq50$. Although minimizing $J_p$ over $J$ results in smoother shapes, which may be hiding potential cusps, attempting to minimize $J_p$ for larger $p$ or directly minimize $J$ did not yield superior results. Thus, we can be fairly certain that our computational results are more or less representative of the true shapes --- or at least local minima --- and limited primarily by computational precision. 
\\\\
As a last remark, we note that our results largely avoid a common class of local minima: disks or disjoint unions of disks. Given $\Omega \subset \mathbb{R}^2$, such regions cannot minimize $\lambda_k$ for $k\geq5$ \cite[Theorem 6]{Berger2015}. Besides of $k=17$, our results successfully reflect that guarantee. For $k< 5$, meanwhile, our results supported the possibility that $\lambda_3$ is minimized by the disk and $\lambda_4$ is minimized by a union of disks with known radii as per (18) \cite[Theorem 6]{Berger2015}.
\medskip
\subsection{\space \space Testing the Pólya Conjecture.}
Consider the Dirichlet eigenvalue counting function
$$\mathcal{N}_\Omega^{Dir}(\lambda):= \#\{\lambda_k(\Omega): \lambda_k(\Omega) \leq \lambda\}.$$
Weyl's law \cite{Weyl1911} gives asymptotics for this function that depend only on the set volume:
$$\mathcal{N}_\Omega^{Dir}(\lambda)=\frac{|\Omega|\cdot\omega(n)}{(2\pi)^n}\lambda^{n/2} + o(\lambda^{n/2}).$$
In the above, $\omega(n)$ denotes the volume of the unit ball in $\mathbb{R}^n$.
\\\\
George Pólya later conjectured that this term is, in fact, a uniform bound on bounded open domains \cite{Polya1954PlausibleInference}. For our regions of interest with $\Omega \subset \mathbb{R}^2$, this would mean
$$\mathcal{N}_\Omega^{Dir}(\lambda) < \frac{|\Omega|}{4\pi}\lambda.$$

Since the minimizers for the Dirichlet eigenvalues are the natural extreme cases for this conjecture, we will test it by calculating the quantities
\begin{equation}
\mathcal{N}_{\Omega^*_k}^{Dir}(\lambda_k^*) - \frac{|\Omega^*_k|}{4\pi}\lambda_k^*.    
\end{equation}
\begin{figure}
    \centering
    \caption{Difference Between the Dirichlet Counting Function and Weyl Term}
    \smallskip
    \includegraphics[width=.75\paperwidth]{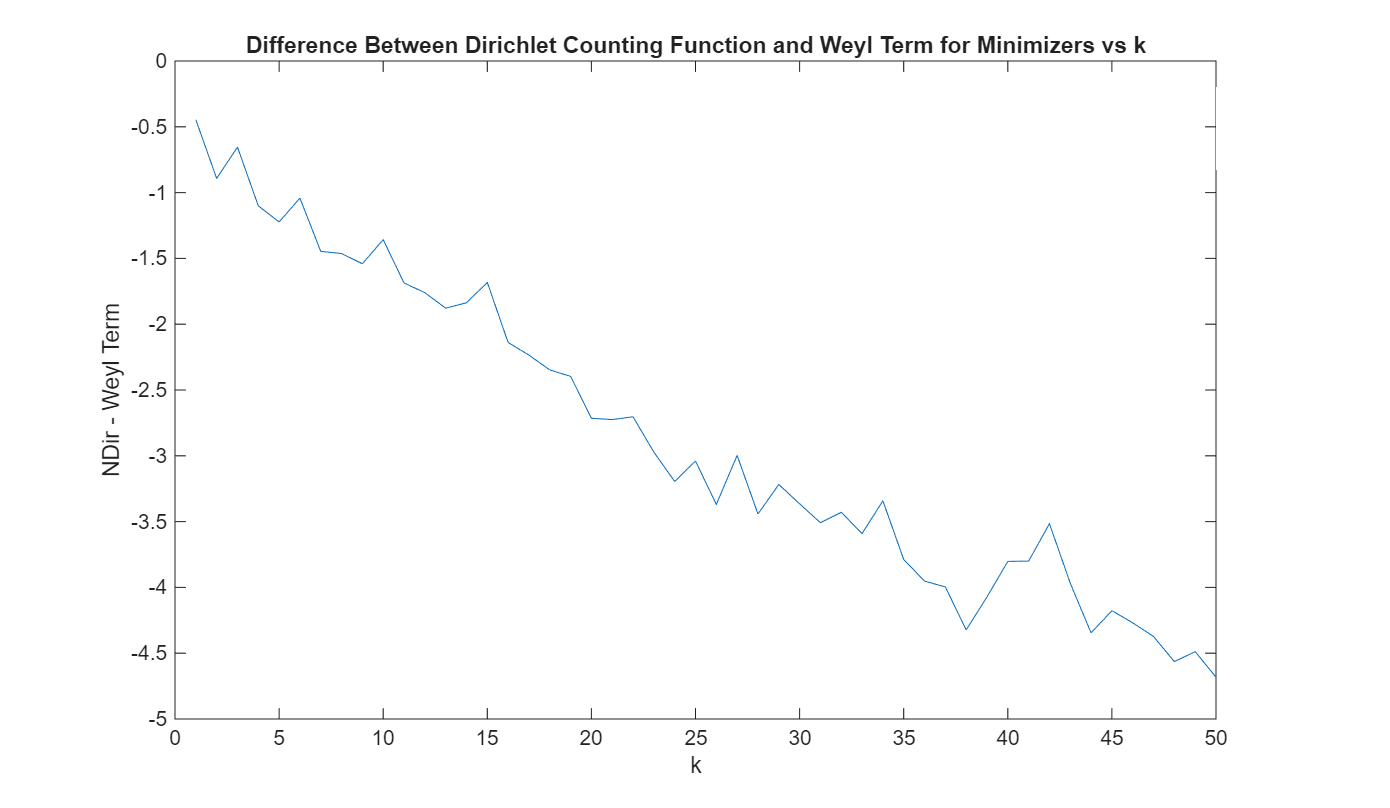}
    \label{fig:polyatestgraph}
\end{figure}
\\
As expected, the Pólya conjecture holds for $1\leq k \leq 50$. Furthermore, it appears that the difference between the counting function and the Weyl Term is positively correlated with $k$. This may be of interest to those who study the problem in greater depth or work on proving related bounds. A graph of this experiment\footnote{The exact values for the computed differences can be obtained from the codebase at \url{https://github.com/AtharvThakur0/Laplace-LSF-Dirichlet-Eigenvalue-Numerics}} has been included in Figure 4.
\\
\subsection{\space \space Convergence of Weights.} 

A key component of the stationary conditions for the minimization of $J_p$ are the weights assigned to each eigenfunction $u_1,u_2,...u_k$ on $\Omega$ by the derivative of $J_p$. Referring back to (15) and applying the volume constraint $|\Omega|=1$, the weight $\mu_m$ given to each $\left(\frac{\partial u_m}{\partial n}\right)^2$ can be written as
\begin{equation}
    \mu_m=\left(\frac{\lambda_m}{\left(\sum_{j=1}^{k}\lambda_j^p\right)^\frac{1}{p}}\right)^{p-1}
\end{equation} 
\\
Holding eigenvalues $\lambda_1, \lambda_2,...,\lambda_k$ constant yields
\\
\begin{equation}
\lim_{p\to\infty}\mu_m=\frac{[\lambda_m=\lambda_k]}{\nu},
\end{equation}
\\
where $[\cdot]$ is the Iverson bracket and $\nu$ is the multiplicity of $\lambda_k$. 
\\\\
However, the far more interesting and difficult question is to ask how the weights $\mu_m$ dynamically behave across $J_p$-minimizers, $\Omega^*_{k,p}$, as $p\to\infty$. Figures 5 and 6 feature a selection of experiments on this question. We only select $k$ such that the baseline computational minimizers of $J$, $\Omega^*_k,$ are connected regions. This removes instances where changes in topology would severely distort the apparent behavior of weights due to our limited sampling of $p$.

\begin{figure}
  \caption{Weights vs p for $k\in[5,6,7,8]$}
  \bigskip
  \includegraphics[width=.75\paperwidth]{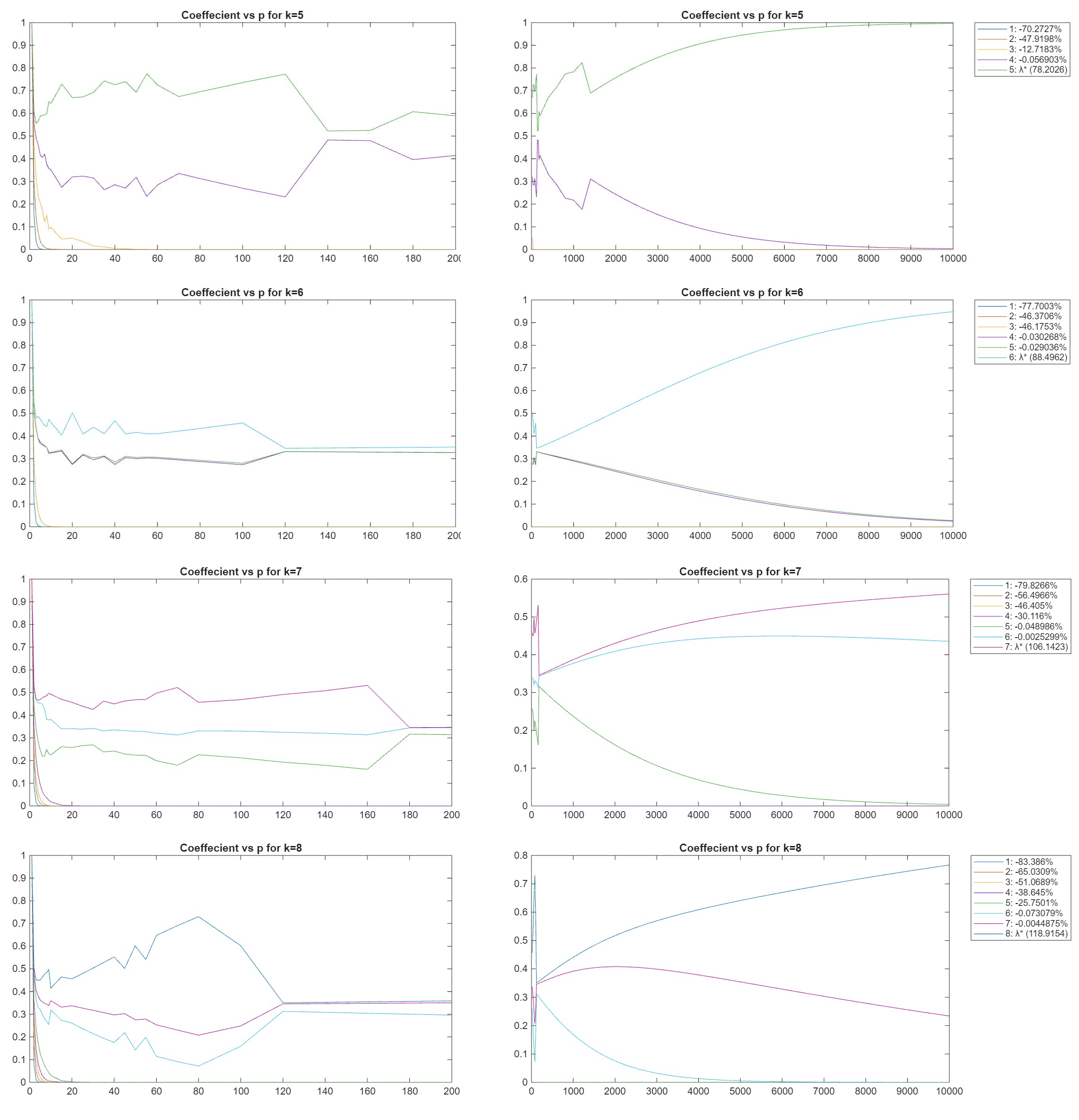}
  \label{fig:weights_v_p_k_5_8}
\end{figure}
\begin{figure}
  \caption{Weights vs p for $k\in[9,10,15,17,22]$}
  \bigskip
  \includegraphics[width=.66\paperwidth]{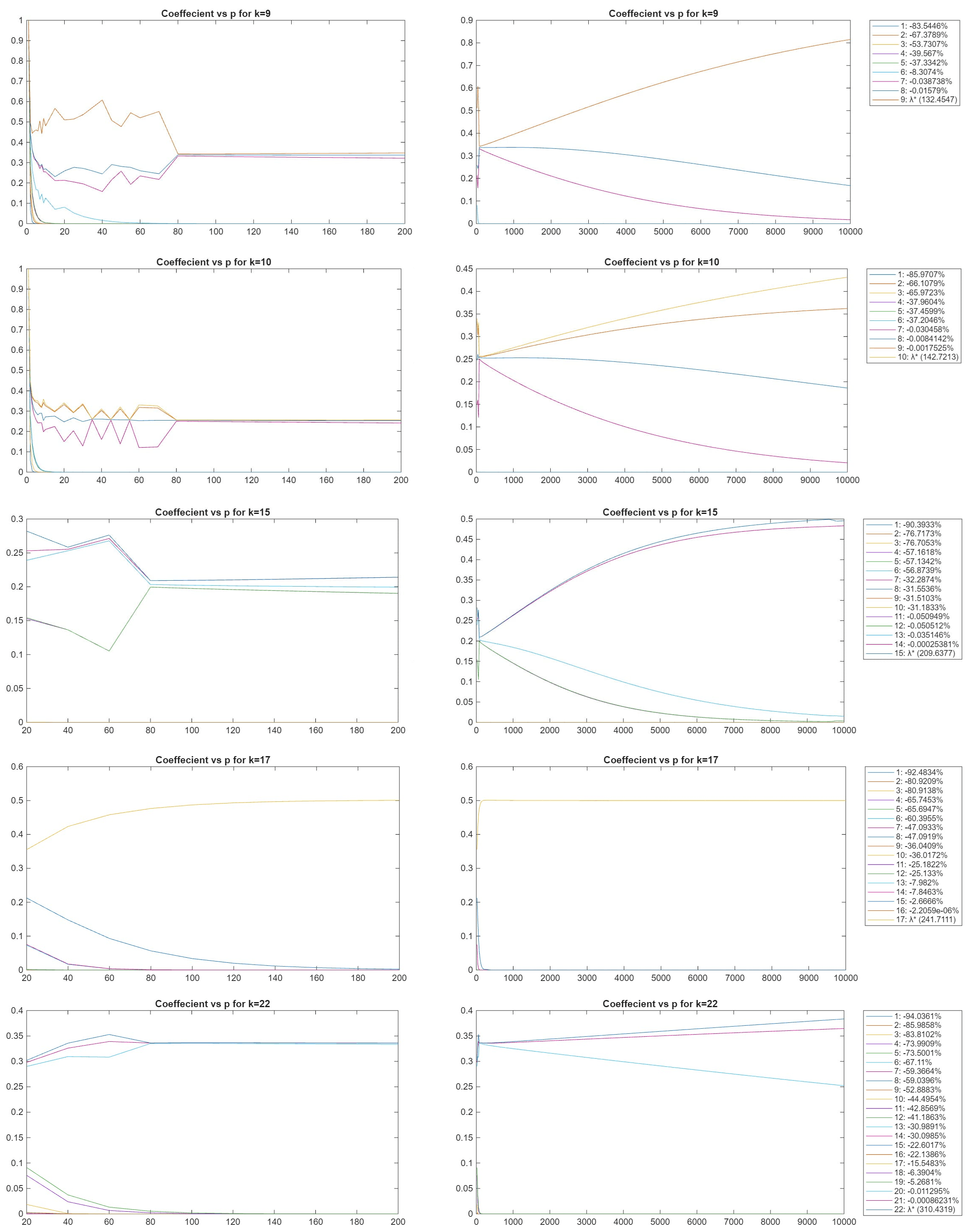}
  \label{fig:weights_v_p_k_9_22}
\end{figure}
\bigskip
Clearly, numerical error distorts convergence behavior in many of these graphs. However, for those that display low percent differences between their uppermost eigenvalues, it seems as though $\mu_m$ tend towards the values predicted by (\theequation) before diverging based on the discrepancy between $\lambda_m$ and $\lambda_k$. Nevertheless, we cannot say for certain, as early behavior is quite noisy and the $p$-norm begins to dominate any changes in the eigenvalues themselves beyond $p\approx 3000$. This problem warrants further investigation, either through analytical means or the use of increased computing power; we refined each $J_p$ minimizer up to $h_{max}=.005$ due to resource constraints, however it would be feasible and potentially fruitful go beyond this precision. 
\bigskip
\bigskip

\section{Conclusion}{
\bigskip
In this work, we made several improvements to numerical methods for computing volume-constrained minimizers for the Dirichlet eigenvalues. Our primary innovations were the formulation of a new type of level set function based on the torsion function and a new method of domain representation designed to accompany it. These techniques were developed alongside a regularized objective function and an optimal disjoint union finding algorithm. We proved that the minimizers of our regularized problem weak $\gamma$-converge to the minimizers of the original --- a notion of convergence strong enough to produce behaviorally equivalent level sets for the task of numerical domain reconstruction. With these implementations, we produced FEM-based results competitive with the overall best known results from the literature. Our techniques were then applied to investigating open problems within the field that may see benefit from numerical experiments. All of the code and results from this work have been made publicly available\footnote{\url{https://github.com/AtharvThakur0/Laplace-LSF-Dirichlet-Eigenvalue-Numerics}} so as to facilitate further analysis and encourage the use of the Laplace LSF in other problems where it may provide benefit.}
\bigskip
\section{Acknowledgments}
{
\bigskip
The author would like to thank Professor Dennis Kriventsov for his extensive guidance. Accordingly, this work was partially supported by NSF DMS grant 2247096.
}
\\
\bibliographystyle{siam} 
\bibliography{refs}
\end{document}